\newcommand\nc\newcommand
\nc\bfa{{\boldsymbol a}}\nc\bfA{{\boldsymbol A}}\nc\cA{{\mathscr A}}
\nc\bfb{{\boldsymbol b}}\nc\bfB{{\boldsymbol B}}\nc\cB{{\mathscr B}}
\nc\bfc{{\boldsymbol c}}\nc\bfC{{\boldsymbol C}}\nc\cC{{\mathscr C}}
\nc\bfd{{\boldsymbol d}}\nc\bfD{{\boldsymbol D}}\nc\cD{{\mathscr D}}
\nc\bfe{{\boldsymbol e}}\nc\bfE{{\boldsymbol E}}\nc\cE{{\mathscr E}}
\nc\bff{{\boldsymbol f}}\nc\bfF{{\boldsymbol F}}\nc\cF{{\mathscr F}}
\nc\bfg{{\boldsymbol g}}\nc\bfG{{\boldsymbol G}}\nc\cG{{\mathscr G}}
\nc\bfh{{\boldsymbol h}}\nc\bfH{{\boldsymbol H}}\nc\cH{{\mathscr H}}
\nc\bfi{{\boldsymbol i}}\nc\bfI{{\boldsymbol I}}\nc\cI{{\mathcal I}}
\nc\bfj{{\boldsymbol j}}\nc\bfJ{{\boldsymbol J}}\nc\cJ{{\mathscr J}}
\nc\bfk{{\boldsymbol k}}\nc\bfK{{\boldsymbol K}}\nc\cK{{\mathscr K}}
\nc\bfl{{\boldsymbol l}}\nc\bfL{{\boldsymbol L}}\nc\cL{{\mathscr L}}
\nc\bfm{{\boldsymbol m}}\nc\bfM{{\boldsymbol M}}\nc{\cM}{{\mathscr M}}
\nc\bfn{{\boldsymbol n}}\nc\bfN{{\boldsymbol N}}\nc\cN{{\mathscr N}}
\nc\bfo{{\boldsymbol o}}\nc\bfO{{\boldsymbol O}}\nc\cO{{\mathscr O}}
\nc\bfp{{\boldsymbol p}}\nc\bfP{{\boldsymbol P}}\nc\cP{{\mathscr P}}
\nc\bfq{{\boldsymbol q}}\nc\bfQ{{\boldsymbol Q}}\nc\cQ{{\mathscr Q}}
\nc\bfr{{\boldsymbol r}}\nc\bfR{{\boldsymbol R}}\nc\cR{{\mathscr R}}
\nc\bfs{{\boldsymbol s}}\nc\bfS{{\boldsymbol S}}\nc\cS{{\mathscr S}}
\nc\bft{{\boldsymbol t}}\nc\bfT{{\boldsymbol T}}\nc\cT{{\mathscr T}}
\nc\bfu{{\boldsymbol u}}\nc\bfU{{\boldsymbol U}}\nc\cU{{\mathscr U}}
\nc\bfv{{\boldsymbol v}}\nc\bfV{{\boldsymbol V}}\nc\cV{{\mathscr V}}
\nc\bfw{{\boldsymbol w}}\nc\bfW{{\boldsymbol W}}\nc\cW{{\mathscr W}}
\nc\bfx{{\boldsymbol x}}\nc\bfX{{\boldsymbol X}}\nc\cX{{\mathscr X}}
\nc\bfy{{\boldsymbol y}}\nc\bfY{{\boldsymbol Y}}\nc\cY{{\mathscr Y}}
\nc\bfz{{\boldsymbol z}}\nc\bfZ{{\boldsymbol Z}}\nc\cZ{{\mathscr Z}}
\nc{\bb}{{\mathbbm{1}}}
\nc\reals{{\mathbb R}}
\nc\EE{{\mathbb E}}
\nc{\blue}[1]{{\color{blue}{#1}}}
\nc{\red}[1]{\textcolor{red}{#1}}
\nc{\comment}[1]{\textcolor{red}{(#1)}}
\newtheorem{theorem}{Theorem}[section]
\newtheorem{lemma}[theorem]{Lemma}
\newtheorem{proposition}[theorem]{Proposition}
\theoremstyle{remark}
\DeclareMathOperator{\rank}{rank}
\DeclareMathOperator{\diam}{diam}
\renewcommand{\hat}{\widehat}
\begin{document}
		\title{Bounds for discrepancies in the Hamming space}
		\author[A. Barg]{Alexander Barg$^{1}$}
		\thanks{\noindent
		$^1$ Department of ECE and Institute for Systems Research, University of Maryland, College Park, MD 20742, USA and Inst. for Probl. Inform. Trans., Moscow, Russia. Email: abarg@umd.edu. Research of this author was partially supported by NSF grants CCF1618603 and CCF1814487.}
		\author[M. Skriganov]{Maxim Skriganov$^{2}$}
		\thanks{\hspace*{0in}$^2$
St.~Petersburg Department of Steklov Institute of Mathematics, Russian Academy of Sciences, nab. Fontanki 27, St.~Petersburg, 191023, Russia. Email: maksim88138813@mail.ru.}

 \begin{abstract}  
We derive bounds for the ball $L_p$-discrepancies in the Hamming space for 
$0<p<\infty$ and $p=\infty$. Sharp estimates of discrepancies have been 
obtained for many spaces such as the Euclidean spheres and more general compact 
Riemannian manifolds. 
In the present paper, we show that the 
behavior of discrepancies in the Hamming space differs fundamentally because 
the volume of the ball in this space 
depends on its radius exponentially while such a dependence for the 
Riemannian manifolds is polynomial.
 \end{abstract}
 \maketitle
\section{Introduction} 
\subsection{Basic definitions.}
Let $\cX_n=\{0,1\}^n$ be the binary Hamming space which can be also thought of 
as a linear space $\mathbb F^n_2$ over the finite field $\mathbb F_2$. The 
cardinality $|\cX_n|=2^n$.
Denote 
by 
$B(x,t)$ the ball with center at $x\in\cX_n$ and radius $t\ge 0$, i.e., the set 
of all
points $y\in \cX_n$ with $d(x,y)\le t,$ where $d(x,y)$ is the Hamming distance. 
The volume of the ball $v(t):=|B(x,t)|=\sum_{i=0}^t\binom ni$ is independent of 
$x\in\cX_n$. It is convenient to assume that $B(x,t)=\emptyset$ and $v(t)=0$
for $t<0$, and $B(x,t)=\cX_n$ and $v(t)=2^n$ for $t>n$.

For an $N$-point subset $Z_N\subset {\cX_n}$ and a ball $B(y,t)$ define the 
local discrepancy as follows:
   \begin{align}\label{eq:local}
D(Z_N,y,t)=|B(y,t)\cap Z_N|-N\, 2^{-n} v(t).
    \end{align}
We note that $D(Z_N,y,n)=0$ for any $Z_N,y,$ and thus below we limit ourselves to the values $0\le t\le n-1.$
Define the weighted $L_p$-discrepancy by
       \begin{align}\label{eq:Dp}
D_p(G,Z_N)=\Bigl(\,\sum\nolimits^{n-1}_{t=0}g_t \sum\nolimits_{y\in\cX_n}2^{-n} 
              |D(Z_N,y,t)|^p\,\Bigr)^{1/p},\quad 0<p<\infty\, ,
     \end{align}
where $G=(g_0,\dots,g_{n-1})$ is a vector of nonnegative weights 
normalized by
\begin{equation}\label{eq:norm}
\sum\nolimits_{t=0}^{n-1} g_t=1.
\end{equation}
With such a normalization, we have
\begin{equation}\label{eq:incr}
D_p (G,Z_N)\,\le\, D_q (G,Z_N)\quad 0<p< q <\infty \, .
\end{equation}

The $L_{\infty}$-discrepancy is defined by
\begin{equation}
D_\infty(I,Z_N)=\max_{t\in I}\max_{y\in \cX_n} |D(Z_N,y,t)|\, ,
\label{eq:Dinfty}
\end{equation}  
where $I\subseteq\{0,\dots,{n-1}\}$ is a subset of the set of the radii.

We also introduce the following extremal discrepancies  
  \begin{equation*}
  D_p(G,n,N)=\min_{Z_N\subset\cX_n} D_p(G,Z_N)\, , \quad 0<p < \infty\, ,
  \end{equation*}
and 
\begin{equation*}
D_\infty(I,n,N)=\min_{Z_N\subset\cX_n} D_\infty (I,Z_N)\, .
\end{equation*}    
These quantities can be thought of as geometric characteristics of 
the Hamming space.

It is useful to keep in mind the following simple observations:

(i) If $Z_N^c=\cX_n\setminus Z_N$ is the complement of $Z_N\subseteq\cX_n$, 
then 
$
D(Z_N,y,t)=-D(Z_N^c,y,t) \, ,
$
and we have
\begin{equation*}
D_p(G,Z_N)=D_p(G,Z_N^c)\quad \mbox{and} \quad D_p(G,n,N)=D_p(G,n,2^n-N) \, ,
\end{equation*}
for all $0<p\le\infty$.
Hence, generally it suffices to consider only subsets $Z_N$ with $N\le 2^{n-1}$.
Together with results of \cite{Barg2020} on quadratic 
discrepancies this gives rise to the next claim:
	Let $Z_N$ be a perfect code in $\cX_n$, then the set $Z_N^c$ attains the 
	minimum 
	value $D_2(G_1,n,2^n-N)$,
where $G_1=(1/n,1/n,\dots,1/n)$.
For instance, for $n=2^m-1$ and $N=2^n(1-2^{-m}), m\ge 2$ the code $Z_N$ formed 
of spheres of radius one around the codewords of the Hamming code
(i.e., the union of the $n$ cosets of the Hamming code) is a minimizer of 
quadratic discrepancy. Another family of minimizers is given by
$\cX_n\backslash\{y,\bar y\}$ for any $y\in\cX_n$, 
where $\bar y:=1^n+y$ is a point antipodal to $y$ and $1^n\in\cX_n$ denotes the 
all-ones vector. Some 
other examples can be also given; see \cite{Barg2020}.
For the reader's convenience, we emphasize that the quadratic 
discrepancy $D^{L_2}(Z_N)$ 
in \cite{Barg2020}	is
related with our definition \eqref{eq:Dp} by 
$D^{L_2}(Z_N)=2^n n\, N^{-2}(D(G_1,Z_N))^2$.

(ii) Without loss of generality we can restrict the range of summation on $t$ 
in \eqref{eq:Dp} from $\{0,\dots,n\}$ to $\{0,\dots,\nu\},$ where $\nu=\lfloor 
(n-1)/2\rfloor,$ limiting ourselves to a \emph{half} of the full range. 
More precisely, we have 
\begin{equation*}
D_p(G,Z_N)=D_p(G^{*},Z_N)\quad \mbox{and} \quad D_p(G,n,N)=D_p(G^{*},n,N) \, ,
\end{equation*}
where $G^{*}=(g_1^{*},\dots ,g_{\nu}^{*})$ with $g_t^{*}=g_t +g_{n-t+1}$.

Indeed, notice that 
$B(y,t)=\cX_n \setminus B(\bar y, n-1-t)$, and therefore
$D(Z_N,y,t)=D(Z_N,\bar y,n-1-t)$. Also, obviously,
\begin{align*}
\sum\nolimits_{y\in\cX_n} |D(Z_N,\bar y,t)|^p=\sum\nolimits_{y\in\cX_n} 
|D(Z_N,y,t)|^p\, ,
\end{align*}
and thus 
\begin{align*}
D_p(G,Z_N)&=\Bigl(\sum\nolimits^{\nu}_{t=0}\Bigl( g_t 
2^{-n}\sum\nolimits_{y\in\cX_n} |D(Z_N,y,t)|^p+
g_{n-1-t}2^{-n}\sum\nolimits_{y\in\cX_n} |D(Z_N,\bar 
y,t)|^p\Bigr)\Bigr)^{1/p}\\
&=\Bigl(\,2^{-n}\sum\nolimits^{\nu}_{t=0}(g_t+g_{n-t-1}) 
\sum\nolimits_{y\in\cX_n} 
|D(Z_N,y,t)|^p\,\Bigr)^{1/p}.
\end{align*}
We conclude that limiting the summation range of $t$ amounts to changing the 
weights in definition \eqref{eq:Dp}.
Similar arguments hold true for the $L_{\infty}$-discrepancy \eqref{eq:Dinfty}.


\subsection{Earlier results.}
Discrepancies in compact metric measure spaces have been studied for a long time, starting with basic results in the theory of uniform distributions \cite{Beck1984,Beck1987,Matousek1999}. 
In particular, quadratic discrepancy of finite subsets of the Euclidean sphere is related to the structure of the distances in the subset through a well-known identity called Stolarsky's invariance principle \cite{Stolarsky1973}. 
Stolarsky's identity expresses the $L_2$-discrepancy of a spherical set as a difference between the average distance on the sphere and the 
average distance in the set. Recently it has been a subject of renewed attention in the literature. In particular, 
papers \cite{Brauchart2013,Skriganov2017,Bilyk2018} gave new, simplified proofs of Stolarsky's invariance, while \cite{Skriganov2020}
extended Stolarsky's principle to projective spaces and derived asymptotically tight estimates of discrepancy. Sharp bounds on 
quadratic discrepancy
were obtained in \cite{BCCGT2019,BG2015,Skriganov2017, Skriganov2019}. Finally, paper \cite{skriganov2018bounds} 
introduced new asymptotic upper bounds on $L_p$-discrepancies of finite sets in compact metric measure spaces.

A recent paper \cite{Barg2020} 
initiated the study of Stolarsky's invariance in finite metric spaces, deriving 
an explicit form of the invariance
principle in the Hamming space $\cX_n$ as well as bounds on the quadratic discrepancy of subsets (codes) in $\cX_n.$ Explicit formulas
were obtained for the uniform weights $G_1=(1/n,1/n,\dots,1/n)$. 
Namely, let $x,y\in \cX_n$ be two points with $d(x,y)=w.$ Define
  $$
  \lambda(x,y)=\lambda(w):=2^{n-w}w\binom{w-1}{\lceil\frac w2\rceil-1}, \quad w=0,\dots,n.
  $$
As shown in \cite[Eq.~(23)]{Barg2020},
Stolarsky's identity for $Z_N\subset\cX_n$ can be written in the following form:
  \begin{equation}\label{eq:Ex}
  {2^n}n D_2(G_1,Z_N)^2=\frac{nN^2}{2^{n+1}}\binom{2n}n-\sum_{i,j=1}^N 
  \lambda(d(z_i,z_j)).
  \end{equation}
Using this representation, \cite[Cor.5.3, Thm.5.5]{Barg2020}  further showed that
  $$
   c\,n^{-3/4}\,N^{1/2}\Big(1-\frac{N}{2^n}\Big)^{1/2}\le D_2(G_1,n,N)\le 
   C\,n^{-1/4}\,N^{1/2}\, ,
  $$
where $c,C$ are some universal constants. Here the upper bound is proved by random choice and the lower bound by linear programming. 
The method of linear programming, well known in coding theory \cite{del72b,lev98}, is applicable to the problem of bounding the 
quadratic discrepancy because it can be expressed as an energy functional on the code with potential given by $\lambda.$
Moreover, there exist sequences of subsets (codes)
$Z_N\subset \cX_n, n=2^m-1$ whose quadratic discrepancy meets the lower bound. 
Observe also that if $N=o(2^n),$ then the bounds differ only by a factor of $n$: for example, if $N\simeq 2^{\alpha n},\, 0<\alpha <1$, 
then
\begin{equation}\label{eq:LP}
N^{1/2}\, (\log N)^{-3/4}\,\lesssim D_2(G_1,n,N)\lesssim\, N^{1/2} \,(\log 
N)^{-1/4}\, ,
\end{equation}

In this short paper we develop the results of 
\cite{Barg2020}, proving bounds on $D_p(G,n,N), p\in(0,\infty].$  We also 
consider a restricted version of the discrepancy $D_p(G,Z_N),$ limiting 
ourselves to the case of hemispheres in $\cX_n.$ In other words, we take local
discrepancy for $t=(n-1)/2$ in \eqref{eq:local} ($n$ odd) and average its value over the centers of the balls. 
For the case of the Euclidean sphere, quadratic discrepancy for hemispheres was 
previously studied in \cite{Bilyk2018, Skriganov2019}, which established a 
version of Stolarsky's invariance for this case.

\section{Bounds on \texorpdfstring{${D_p(G,n,N)}$}{}}
We are interested in universal bounds for discrepancies 
\eqref{eq:Dp}--\eqref{eq:Dinfty} for given $n,N$ and $p\in 
(0,\infty]$ without 
accounting for the structure of the subset. For the case of 
finite subsets in compact Riemannian manifolds this problem was recently 
studied in \cite{skriganov2018bounds}, and we draw on the approach of this 
paper in the derivations below.

\subsection{The case \texorpdfstring{${0<p<\infty}$}{}}
We shall consider random subsets $Z_N\subset\cX_n$, using the following standard result to handle discrepancies of such subsets.
\begin{lemma}[Marcinkiewicz--Zygmund inequality; \cite{CT1988}, Sec.10.3]
	Let $\zeta_j,\, j\in J,\,\, |J|<\infty,$ be a finite collection of 
	real-valued independent random variables 
	with expectations 
	$\EE\,\zeta_j =0$ and $\EE\,\zeta_j^2<\infty\, , j\in J$. Then, we 
	have 
	\begin{equation*}
	\EE\, |\,\sum\nolimits_{j\in J}\zeta_j\,|^p\,\le\, 2^p\, (p+1)^{p/2}\,\EE\,
	(\,\sum\nolimits_{j\in J}\zeta^2_j \,)^{p/2}, 
	\quad 1\le p<\infty\, .
	\end{equation*}
\end{lemma}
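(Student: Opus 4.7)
The plan is to follow the classical symmetrization--randomization route: reduce the problem to a Rademacher sum and then invoke Khintchine's inequality. Concretely, let $\{\zeta'_j\}_{j\in J}$ be an independent copy of the family $\{\zeta_j\}_{j\in J}$ on an enlarged probability space. Since $\EE\sum_j\zeta'_j=0$, convexity of $|\cdot|^p$ and Jensen's inequality yield the symmetrization bound
\begin{equation*}
\EE\Bigl|\sum\nolimits_{j\in J}\zeta_j\Bigr|^p\le\EE\Bigl|\sum\nolimits_{j\in J}(\zeta_j-\zeta'_j)\Bigr|^p,
\end{equation*}
so it suffices to work with the symmetric independent variables $\xi_j:=\zeta_j-\zeta'_j$.

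Next, let $\epsilon_j$ be i.i.d.\ Rademacher variables independent of the $\xi_j$. Symmetry of each $\xi_j$ entails $(\xi_j)_{j\in J}\stackrel{d}{=}(\epsilon_j\xi_j)_{j\in J}$, and Fubini lets one compute $\EE|\sum_j\xi_j|^p$ by first averaging over the signs with the $\xi_j$ frozen. Khintchine's inequality then gives, for every realization of the $\xi_j$,
\begin{equation*}
\EE_\epsilon\Bigl|\sum\nolimits_{j\in J}\epsilon_j\xi_j\Bigr|^p\le(p+1)^{p/2}\Bigl(\sum\nolimits_{j\in J}\xi_j^2\Bigr)^{p/2},
\end{equation*}
a form valid for all $p\ge 1$ (for $p\ge 2$ it is the standard sub-Gaussian moment bound for Rademacher sums; for $1\le p\le 2$ it follows from $\|\cdot\|_p\le\|\cdot\|_2$).

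Finally, one returns to the original variables using $\xi_j^2\le 2\zeta_j^2+2(\zeta'_j)^2$. Summing in $j$, raising to the power $p/2$, invoking convexity of $x\mapsto x^{p/2}$ for $p\ge 2$ (or subadditivity for $1\le p\le 2$), and averaging over the two independent copies, one recovers $\EE(\sum_j\zeta_j^2)^{p/2}$ with an extra factor~$2^p$, producing the announced constant. The main obstacle is the careful bookkeeping of this constant: Khintchine's inequality admits several non-equivalent forms whose constants are all of order $\sqrt{p}$, and the de-symmetrization step behaves differently in the ranges $p\ge 2$ and $1\le p<2$ because $x\mapsto x^{p/2}$ is convex in the first range and concave in the second. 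Matching exactly the stated constant $2^p(p+1)^{p/2}$ requires choosing the right form of Khintchine's bound and treating each range of $p$ separately; otherwise the strategy is entirely classical.
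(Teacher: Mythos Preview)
The paper does not supply a proof of this lemma at all: it is stated with a citation to Chow--Teicher, \emph{Probability Theory}, Sec.~10.3, and used as a black box in the proofs of Theorems~\ref{thm:random} and~\ref{thm:random2}. There is therefore no ``paper's own proof'' to compare your attempt against.

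That said, your outline is the standard textbook route (symmetrization by an independent copy, Rademacher randomization, Khintchine, then undoing the symmetrization), and it is essentially the argument one finds in the cited reference. Your own caveat is the only real issue: the bookkeeping needed to land exactly on the constant $2^p(p+1)^{p/2}$ is delicate, and in the range $1\le p<2$ your de-symmetrization step as written produces $2^{p/2+1}$ rather than $2^p$ (note $2^{p/2+1}>2^p$ there), so a slightly different arrangement of the inequalities is required if one insists on the stated constant. For the purposes of this paper the precise constant is irrelevant---only the shape $C_p\,\EE(\sum_j\zeta_j^2)^{p/2}$ with $C_p$ polynomial in $p$ is used downstream---so your sketch more than suffices.
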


In our first result we construct a random subset $Z_N$ by uniform random choice. Later we will refine this procedure, obtaining a more 
precise bound on $D_p.$
    \begin{theorem}\label{thm:random}
	For all $N\le 2^{n-1}$, we have 
    \begin{equation}\label{eq:random}
D_p(G,n,N)\, \le\, \begin{cases} 
\, 2(p+1)^{1/2}\,N^{1/2}&\text{for\, $1\le p<\infty$,}\\[.05in]
\, 2^{3/2}\,N^{1/2}&\text{for\, $0<p<1$.}
\end{cases}
\end{equation} 
\end{theorem}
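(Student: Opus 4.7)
The plan is to use the probabilistic method: I would draw $Z_N$ uniformly at random from all size-$N$ subsets of $\cX_n$ and bound $\EE[D_p(G,Z_N)^p]$. For fixed $y$ and $t$, the local count $|B(y,t)\cap Z_N|$ has a hypergeometric distribution with mean $Nv(t)2^{-n}$, so $\EE D(Z_N,y,t)=0$. The obstruction to a direct application of the Marcinkiewicz--Zygmund lemma is that the indicators $\mathbbm{1}[x\in Z_N]$ are not independent (they are exchangeable with a fixed sum $N$). To bypass this I would invoke Hoeffding's convex majorization theorem, which asserts that for every convex $\phi$ the hypergeometric count $X_H$ is dominated by the corresponding binomial $X_B\sim\mathrm{Bin}(N,v(t)2^{-n})$: $\EE\phi(X_H)\le\EE\phi(X_B)$. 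Taking $\phi(u)=|u-Nv(t)2^{-n}|^p$, which is convex for $p\ge 1$, converts the problem to one about independent Bernoulli increments.

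Writing $X_B-\EE X_B=\sum_{j=1}^N\zeta_j$ with $\zeta_j=\mathbbm{1}[X_j\in B(y,t)]-v(t)2^{-n}$ for $X_j$ i.i.d. uniform on $\cX_n$, one has $\EE\zeta_j=0$ and $|\zeta_j|\le 1$, hence $\sum_j\zeta_j^2\le N$ pointwise. The Marcinkiewicz--Zygmund lemma then gives
\begin{equation*}
\EE|D(Z_N,y,t)|^p\,\le\,\EE\Bigl|\sum_j\zeta_j\Bigr|^p\,\le\,2^p(p+1)^{p/2}\,\EE\Bigl(\sum_j\zeta_j^2\Bigr)^{p/2}\,\le\,2^p(p+1)^{p/2}N^{p/2}.
\end{equation*}
Averaging over $y$ with weight $2^{-n}$ and over $t$ with weights $g_t$, and using the normalization \eqref{eq:norm}, yields $\EE[D_p(G,Z_N)^p]\le 2^p(p+1)^{p/2}N^{p/2}$. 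Jensen's inequality $\EE X\le(\EE X^p)^{1/p}$ then produces a realization $Z_N$ for which $D_p(G,Z_N)\le 2(p+1)^{1/2}N^{1/2}$, establishing \eqref{eq:random} in the regime $p\ge 1$.

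For the remaining range $0<p<1$, the monotonicity \eqref{eq:incr} applied with $q=1$ immediately gives $D_p(G,Z_N)\le D_1(G,Z_N)\le 2\sqrt{2}\,N^{1/2}=2^{3/2}N^{1/2}$, completing the argument. I expect the hypergeometric-to-binomial reduction via Hoeffding's convex majorization to be the only non-routine ingredient; everything else is averaging, MZ, and Jensen. An alternative route would be to sample $N$ i.i.d. uniform points and bound the effect of collisions separately, but invoking Hoeffding's comparison makes that detour unnecessary.
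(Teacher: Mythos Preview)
Your argument is correct and follows the same probabilistic-method template as the paper: random choice, the Marcinkiewicz--Zygmund inequality applied to the centered indicators, averaging over $y$ and $t$ via \eqref{eq:norm}, and monotonicity \eqref{eq:incr} for $0<p<1$. The only difference is in the random model. The paper simply takes $z_1,\dots,z_N$ i.i.d.\ uniform on $\cX_n$, so the $\zeta_i$ are independent from the outset and Marcinkiewicz--Zygmund applies directly; it then asserts the existence of a \emph{subset} achieving the bound without commenting on possible collisions among the $z_i$. You instead sample a genuine $N$-element subset and invoke Hoeffding's hypergeometric-versus-binomial convex majorization to recover independence. Your route is a touch more careful about the subset-versus-multiset distinction at the cost of one extra classical lemma; the paper's route is shorter but tacitly treats the i.i.d.\ sample as a subset. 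Either way the core inequality and constants are identical. (Minor remark: the appeal to Jensen at the end is unnecessary---once $\EE[D_p^p]\le C^p$, some realization satisfies $D_p^p\le C^p$ and hence $D_p\le C$.)
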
 

{\em Remark 2.1.} Bounds of the type \eqref{eq:random} 
hold true for arbitrary compact metric measure spaces. Theorem \ref{thm:random} is given here to compare it with Theorem 
\ref{thm:random2} below. Notice also that the upper bound \eqref{eq:LP} is better 
than \eqref{eq:random} with $p=2$ and $G=G_1$ by a logarithmic factor. Such an improvement
is obtained in \cite{Barg2020} because of the explicit formula \eqref{eq:Ex} 
for the quadratic discrepancy with the uniform weights $G_1$.
\begin{proof}
Choose a subset $Z_N$ by selecting the points $\{z_i\}_1^N$ independently and uniformly in $\cX_n$. 
The probability that such a point falls into a subset
$\cE\in\cX_n$  equals to $|\cE|/|\cX_n|.$ Therefore, for the local discrepancy  \eqref{eq:local}
of this random subset $Z_N$ we have
\begin{equation}\label{eq:sum10}
D(Z_N,y,t)=\sum\nolimits^N_{i=1} \zeta_i(y,t)\, ,
\end{equation}
where
\begin{equation*}
\zeta_i(y,t)=\bb_{B(y,t)}(z_i)-\frac{v(t)}{|\cX_n|} \, ,
\end{equation*}
where $\bb_{\cE}$ is the indicator function of a subset $\cE\subseteq\cX_n$. The quantities $\zeta_i(y,t)$ are independent random 
variables that satisfy $|\zeta_i (y,t)|\le 1$ and $\EE\,\zeta_i (y,t)=0$. 

Applying the Marcinkiewicz--Zygmund inequality to the sum \eqref{eq:sum10}, we obtain
\begin{equation*}
\EE\, |D(Z_N,y,t)|^p\le 2^p\, (p+1)^{\,p/2}\,N^{\,p/2},
\quad 1\le p<\infty\, ,
\end{equation*}
and, therefore, in view of \eqref{eq:norm},
\begin{equation*}
\EE\, D(G,Z_N)^p\le 2^p\, (p+1)^{\,p/2}
\, N^{\,p/2},
\quad 1\le p<\infty\,.
\end{equation*}
Thus, there exists a subset 
$Z_N=Z_N(p)\subset \cX_n,\, 1\le p<\infty,$ whose discrepancy is bounded above 
as in this inequality. For $0<p<1$, in view of \eqref{eq:incr}, we can put 
$Z_{{N}}(p)=Z_{{N}}(1)$ to complete the proof.
\end{proof}	

In some situations the bound of this theorem can be improved relying on the method of {\em jittered} (or {\em stratified}) sampling, which
uses a partition of the metric space into subsets of small diameter and equal 
volume. This idea goes back to classical works on discrepancy theory 
\cite[pp.237-240]{Beck1984, Beck1987} and it was used more recently in 
\cite{BilykLacey2017,BCCGT2019,BGKZ2020} for the case of the 
Euclidean sphere and in \cite{skriganov2018bounds} for general metric spaces. Below we follow the approach of \cite{skriganov2018bounds}. 
In the case of the Hamming space the natural way to proceed is to partition 
$\cX_n$ into sub-hypercubes of a fixed dimension.

In our analysis
bounds on the volume of 
ball $v(t)$ are crucial.  
For large $n$ and $t=\lambda n\, ,0\le\lambda\le 1$, the 
well-known bound on $v(t)$ (cf. \cite[p.~310]{mac91}), can be written in the form
\begin{align}\label{eq:vol}
v(\lambda n)\le 2^{nH(\lambda)}\, ,
\end{align}
where
\begin{equation}\label{eq:H}
H(\lambda)=
\begin{cases}  
h(\lambda),&\text{if $0\le\lambda\le 1/2$},\\
1,&\text{if $1/2<\lambda\le 1$}\, ,
\end{cases}
\end{equation}
and $h(\lambda)=-\lambda\log_2\lambda-(1-\lambda)\log_2(1-\lambda)$ is the 
standard binary entropy, and in general, the bound \eqref{eq:vol} can not be 
improved.
Formally speaking, the statement \eqref{eq:vol} requires $\lambda n$ be 
integer, but this 
does not matter for the asymptotic arguments that we employ.  

\begin{theorem}\label{thm:random2}
	Let 
	$N=2^{\alpha n},\, 0<\alpha <1,$ be a power of  $2$.
	Suppose that the weights
	$g_t=0$ for $t>\beta n,\, 0<\beta<1/2$. Then  
	\begin{equation}\label{eq:hc}
	D_p(G,n,N)\, \le \, \begin{cases}
	\, 2(p+1)^{1/2}\,\, N^{(1-\kappa)/2},&\text{for $1\le p<\infty$\, ,}\\[.05in]
	\, 2^{3/2}\,N^{(1-\kappa)/2},&\text{for $0<p<1$\, ,}
	\end{cases} 
	\end{equation}
	where
\begin{equation}\label{eq:kappa}
\kappa=\kappa (\alpha,\beta)=\frac{1-H(1+\beta -\alpha)}{\alpha}\,\ge 0 \, .
\end{equation}
If $\alpha >\frac12 +\beta$, then the exponent $\kappa (\alpha,\beta)>0$, and the 
bound \eqref{eq:hc} is better than \eqref{eq:random}.
\end{theorem}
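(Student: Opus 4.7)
The plan is to refine the uniform random construction of Theorem~\ref{thm:random} by \emph{jittered} (stratified) sampling along a coordinate partition. Since $N=2^{\alpha n}$ is a power of two, write $\cX_n=\ff^{\alpha n}_2\times \ff^{(1-\alpha)n}_2$ and partition it into the $N$ sub-hypercubes $Q_c=\{c\}\times \ff^{(1-\alpha)n}_2$, $c\in \ff^{\alpha n}_2$, each of size $|Q_c|=2^{(1-\alpha)n}$. Draw one point $z_c$ independently and uniformly from each $Q_c$, and let $Z_N=\{z_c\}_{c}$.

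The crucial geometric fact is that for $y=(y',y'')$ and $z=(c,z'')\in Q_c$ one has $d(y,z)=d(y',c)+d(y'',z'')$. Consequently $Q_c\subseteq B(y,t)$ when $d(y',c)\le t-(1-\alpha)n$, $Q_c\cap B(y,t)=\emptyset$ when $d(y',c)> t$, and otherwise $Q_c$ is a \emph{boundary} coset. Setting
$$
\zeta_c(y,t)=\bb_{B(y,t)}(z_c)-\frac{|B(y,t)\cap Q_c|}{|Q_c|},
$$
the $\zeta_c$ are independent, mean-zero, bounded by $1$, and vanish identically unless $Q_c$ is a boundary coset. Writing $M(y,t)$ for the number of boundary cosets, applying the Marcinkiewicz--Zygmund inequality to $D(Z_N,y,t)=\sum_c\zeta_c(y,t)$, and using $\sum_c\zeta_c^2\le M(y,t)$, I obtain
$$
\EE\,|D(Z_N,y,t)|^p\le 2^p(p+1)^{p/2}\,M(y,t)^{p/2},\quad 1\le p<\infty.
$$

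The main step, and where the entropy bound \eqref{eq:vol} enters, is the uniform estimate of $M(y,t)$ for $t\le\beta n$. By the distance decomposition above, every point of a boundary coset lies at Hamming distance from $y$ in the range $(t-(1-\alpha)n,\,t+(1-\alpha)n]$, so
$$
M(y,t)\cdot |Q_c|\le v_n\bigl(t+(1-\alpha)n\bigr)\le v_n\bigl((1+\beta-\alpha)n\bigr)\le 2^{n H(1+\beta-\alpha)},
$$
the last inequality being \eqref{eq:vol} (note $1+\beta-\alpha\le 1$ since $\alpha\ge\beta$). Dividing by $|Q_c|=2^{(1-\alpha)n}$ gives $M(y,t)\le 2^{n(H(1+\beta-\alpha)-(1-\alpha))}=N^{1-\kappa}$, with $\kappa$ as in \eqref{eq:kappa}. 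Integrating against $g_t\,2^{-n}$, using the normalization \eqref{eq:norm}, and selecting a good realization yields \eqref{eq:hc} for $p\ge 1$; the range $0<p<1$ follows from the monotonicity \eqref{eq:incr}. Positivity of $\kappa$ when $\alpha>\tfrac12+\beta$ is then immediate, since $1+\beta-\alpha<\tfrac12$ puts $H$ into its strictly-less-than-one regime $H=h<1$.

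The only real obstacle is the boundary-coset enclosure: one must recognize that the \emph{outer} radius $t+(1-\alpha)n$, rather than the naive $t$, is the right quantity to control, because it is this sum that exactly matches the exponent $H(1+\beta-\alpha)$ appearing in~$\kappa$. The remaining work is bookkeeping of the radii and of the constants inherited from Marcinkiewicz--Zygmund.
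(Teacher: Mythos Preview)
Your proof is correct and follows essentially the same route as the paper: both partition $\cX_n$ into $N=2^{\alpha n}$ cosets of a $(1-\alpha)n$-dimensional coordinate subspace, draw one point per coset, observe that only ``boundary'' cosets contribute to the local discrepancy, bound their number via $|J|\cdot 2^{(1-\alpha)n}\le v(t+(1-\alpha)n)\le 2^{nH(1+\beta-\alpha)}$, and finish with Marcinkiewicz--Zygmund. The only cosmetic difference is that you exploit the product metric $d(y,z)=d(y',c)+d(y'',z'')$ directly, whereas the paper phrases the same containment via $\diam V_i=(1-\alpha)n$ and the triangle inequality.
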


\begin{proof}
Let $V\subset \cX_n$ be the $k$-dimensional subspace, 
$k=\gamma n,\,0<\gamma<1$, consisting of all vectors $(x_1,\dots,x_n)$ with $x_i=0$ if $i>k$. Let $N=2^{n-k}=2^{\alpha n},\,\alpha =1-\gamma.$
The affine subspaces 
\begin{equation*}
 V_i=V+s_i, \quad s_i\in \cX_n /V
\end{equation*}
form a partition of the Hamming space
\begin{equation*}
\cX_n=\bigcup\nolimits^N_{i=1}V_i\, ,\quad V_i\cap V_j =\emptyset\, , 
\end{equation*}
where 
$|V_i|=2^{\gamma n}, \diam V_i = \gamma n,$ 
where $\diam\cE=\max\{d(x_1,x_2):x_1,x_2\in\cE\}$ denotes the diameter of a 
subset $\cE\subseteq\cX_n$.

We consider a subset $Z_N=\{z_i\}_1^N$ with $z_i\in V_i, i=1,\dots,N$.
For such a subset, the local discrepancy \eqref{eq:local} can be written as follows
   \begin{equation}\label{eq:sz}
       D(Z_N,y,t)=\sum\nolimits^N_{i=1} \zeta_i(y,t)\, ,
   \end{equation}
where
\begin{equation*}
    \zeta_i(y,t)=\bb_{\{B(y,t)\cap V_i\}}(z_i)-N\frac{ |(B(y,t)\cap V_i|}{|\cX_n|}.
\end{equation*}
Notice that if $V_i\subset B(y,t),$ then $\zeta_i(y,t)\equiv 0$ (recall that $x_i\in V_i$).
Therefore, the sum \eqref{eq:sz} takes the form
\begin{equation*}
D(Z_N,y,t)=\sum\nolimits^N_{i\in J} \zeta_i(y,t)\, ,
\end{equation*}
where $J$ is a subset of indices $i$ such that $V_i\cap B(y,t)\ne\emptyset$
but $V_i\not\subset B(y,t)$ ($V_i$ is not either completely inside or completely outside $B(y,t)$).
Since $\diam V_i=k$, we conclude that all  
$V_i,\, i\in J,$ are contained in the ball $B(y,t+k)$ and do not intersect the 
ball $B(y,t-k-1)$. Therefore,
\begin{equation*}
|J|\,|V_i|\le v(t+k)-v(t-k-1)\le v(t+k)\, .
\end{equation*}
Here we estimate $J$ from above by the number of sets $V_i$ such that $B(y,t)\subset V_i.$
We note that discarding the term $v(t-k-1)$ entails no significant loss in the asymptotics because this term
is exponentially small compared to $v(t+k)$. For $t\le\beta n$, using the bound \eqref{eq:H} and $\alpha+\gamma=1,$ we obtain
\begin{equation*}
|J|\le 2^{nH(\beta +\gamma)-\gamma n}=2^{\alpha n(1-\kappa)}=N^{1-\kappa}\, ,
\end{equation*}
where $\kappa$ is defined in \eqref{eq:kappa}.

Now consider a random subset $Z_N=\{z_i\}_1^N$ in which each point $z_i$ is selected independently and uniformly in $V_i$. 
For a subset $\cE\in V_i$ we have $\Pr(z_i\in\cE)=|\cE|/|V_i|=N|\cE|/|\cX_n|.$The quantities $\zeta_i(y,t)$ are bounded independent random 
variables that satisfy $|\zeta_i (y,t)|\le 1$ and $\EE\,\zeta_i (y,t)=0$.   Applying the Marcinkiewicz--Zygmund inequality to the sum \eqref{eq:sz},
we obtain 
\begin{equation*}
\EE\,|D(Z_N,y,t)|^p\le 2^p(p+1)^{\,p/2}\,
N^{\,p(1-\kappa)/2}\, 
\end{equation*}
and, therefore, in view of \eqref{eq:norm},
\begin{equation}\label{eq:21}
\EE\, |D(G,Z_N)|^p\le 2^p(p+1)^{\,p/2}\,
\, N^{\,p(1-\kappa)/2}.
\end{equation}
Thus, there exists a subset 
	$Z_N=Z_N(p)\subset \cX_n,\, 1\le p<\infty,$ whose discrepancy is bounded 
	above 
	as in this inequality. For $0<p<1$, in view of \eqref{eq:incr}, we can put 
	$Z(p)=Z(1)$ to complete the proof.
\end{proof}

{\em Remark 2.2.} We conjecture that the improvement of the discrepancy estimate for weights equal to zero in
the neighborhood of $t=n/2$ takes place also for $d$-dimensional Euclidean spheres $S^d\subset \reals^{d+1}$ in the case 
that the dimension $d$ grows in proportion to the cardinality $N$. Indeed, the sphere $S^d$ and the Hamming space $\cX_n$ share the
property that for large dimensions the invariant measure concentrates around the ``equator''. This interesting problem deserves a
separate study. 

%
%

\subsection{The case \texorpdfstring{${p=\infty}$}{}}
The following statement is analogous to
\cite[Prop.2.2]{skriganov2018bounds}. 
For $1\le p<\infty$ and any subset $Z_N\subseteq\cX_n$, we have 
\begin{equation}\label{eq:di1}
D_{\infty}(I,Z_N)\le |I|^{\,1/p}\,\, 2^{\,n/p}\,\, D_p(G_I,Z_N)\, ,	
\end{equation}
where
\begin{align*}
D_p(G_I,Z_N)=\Bigl(\,\sum\nolimits^{\nu}_{t=0} |I|^{-1}
\sum\nolimits_{y\in\cX_n}2^{-n} 
|D(Z_N,y,t)|^p\,\Bigr)^{1/p},
\end{align*}
is a special $L_p$-discrepancy with $G_I=(g_1,\dots,g_{\nu})$, where 
$g_t=|I|^{-1}$ for $t\in I$ and $g_t=0$ otherwise.

Indeed, for ${y}\in\cX_n$ and $t\in I$ we have
\begin{align*}
|D(Z_N,y,t)|&\le \Bigl(\,\sum\nolimits_{t\in I} 
\sum\nolimits_{y\in\cX_n} 
|D(Z_N,y,t)|^p\,\Bigr)^{1/p}
\notag
\\
&=|I|^{\,1/p}\,\, 2^{\,n/p}\,\Bigl(\,\sum\nolimits_{t\in I} |I|^{-1}
\sum\nolimits_{y\in\cX_n} 2^{-n}
|D(Z_N,y,t)|^p\,\Bigr)^{1/p}.
\end{align*}
\begin{theorem}
(i)	Let $I\subseteq\{0,1,\dots,n\}$ be an arbitrary subset of the set of radii,
and $N\le 2^{n-1}$. 
Then 
\begin{equation}\label{eq:Di01}
	D_{\infty}(I,Z_N)\le 8\,(1+n)^{\,1/2}\, N^{\,1/2}\, .
\end{equation}
If $N$ increases exponentially, $N\cong 2^{\alpha n}$, then 
$D_\infty(I,n,N)=O((\log_2 N)^{1/2}\, N^{1/2}).$

(ii) Let $I\subseteq\{0,1,\dots,\beta n\}$ be an arbitrary subset of the set of 
radii $t\le\beta n, 0<\beta<1/2$, and let $N=2^{\alpha n}\le 2^{n-1}$ be 
a power of $2$.
Then 
\begin{equation}\label{eq:Di02}
D_{\infty}(I,n,N)\le 8\,\Bigl(2+\frac{\log_2 N}{\alpha}\Bigr)^{1/2}\, 
N^{\,(1-\kappa)/2}\, ,	
\end{equation}
where the exponent $\kappa=\kappa (\alpha,\beta)$ is given in \eqref{eq:kappa}.	
If $\alpha >\frac12 +\beta$, then the exponent $\kappa (\alpha,\beta)>0$, and 
the 
bound \eqref{eq:Di02} is better than \eqref{eq:Di01}.
\end{theorem}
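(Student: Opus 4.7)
My plan is to derive both inequalities by passing from $L_\infty$ to $L_p$ via the embedding \eqref{eq:di1} and then invoking the $L_p$-bounds already established in Theorem \ref{thm:random} and Theorem \ref{thm:random2}, with $p$ chosen optimally at the end.

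Concretely, for a fixed subset of radii $I$, I would form the special weight vector $G_I=(g_0,\dots,g_\nu)$ with $g_t=|I|^{-1}$ for $t\in I$ and $g_t=0$ otherwise. This vector is nonnegative and satisfies the normalization \eqref{eq:norm}, so any bound on $D_p(G,n,N)$ proved earlier applies to $G_I$. Then \eqref{eq:di1} gives
\begin{equation*}
D_\infty(I,n,N)\,\le\, |I|^{1/p}\,2^{n/p}\,D_p(G_I,n,N)
\end{equation*}
for every $1\le p<\infty$.

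For part (i), I would apply Theorem \ref{thm:random} to $G_I$ to get $D_p(G_I,n,N)\le 2(p+1)^{1/2}N^{1/2}$, insert it above, bound $|I|\le n+1$, and balance the factors $|I|^{1/p}\,2^{n/p}$ (which grow as $p\downarrow$) against $(p+1)^{1/2}$ (which grows as $p\uparrow$). The natural choice is $p=n$: then $2^{n/p}=2$, $|I|^{1/n}\le (n+1)^{1/n}\le 2$ for every $n\ge 1$, and $(p+1)^{1/2}=(n+1)^{1/2}$, producing the constant $8$ in \eqref{eq:Di01}. The asymptotic claim $D_\infty(I,n,N)=O((\log_2 N)^{1/2}N^{1/2})$ follows immediately after substituting $n=\alpha^{-1}\log_2 N$.

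For part (ii), the reasoning is identical except that I invoke Theorem \ref{thm:random2} in place of Theorem \ref{thm:random}: this is legitimate because the assumption $I\subseteq\{0,\dots,\beta n\}$ forces $g_t=0$ for $t>\beta n$ in our weight vector $G_I$, matching the hypothesis of that theorem. This gives $D_p(G_I,n,N)\le 2(p+1)^{1/2}N^{(1-\kappa)/2}$ with $\kappa=\kappa(\alpha,\beta)$. Choosing $p=n$ again and using $n+1=1+\alpha^{-1}\log_2 N\le 2+\alpha^{-1}\log_2 N$ produces \eqref{eq:Di02}. The final comparison between \eqref{eq:Di01} and \eqref{eq:Di02} when $\alpha>\tfrac12+\beta$ is a direct consequence of $\kappa(\alpha,\beta)>0$ in that regime, as already noted in Theorem \ref{thm:random2}.

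There is essentially no deep obstacle in this argument; the only subtlety is the choice of $p$. Taking $p$ too small lets $2^{n/p}$ blow up, while taking $p$ too large lets $(p+1)^{1/2}$ blow up, and the sweet spot is $p\asymp n$, which is what forces the $(1+n)^{1/2}$ (respectively $(2+\alpha^{-1}\log_2 N)^{1/2}$) factor in the final estimate. Everything else is bookkeeping of constants.
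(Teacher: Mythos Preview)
Your proposal is correct and follows essentially the same route as the paper: combine the $L_\infty$--$L_p$ comparison \eqref{eq:di1} with the $L_p$ bounds of Theorems~\ref{thm:random} and~\ref{thm:random2} applied to the weight vector $G_I$, and then set $p=n$ to balance the factors. Your write-up is in fact more explicit than the paper's about why $p=n$ is the right choice and how the constant $8$ and the factor $(2+\alpha^{-1}\log_2 N)^{1/2}$ emerge.
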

\begin{proof}
Substituting the bounds \eqref{eq:random} and \eqref{eq:hc} into inequality \eqref{eq:di1}, we obtain
   \begin{align}\label{eq:di2}
D_{\infty}(I,Z_N)\le n^{1/p}\,\, 2^{n/p}\,2\,(p+1)^{1/2}\, N^{1/2}
   \end{align}
and 
   \begin{align}\label{eq:di3}
D_{\infty}(I,Z_N)\le n^{\,1/p}\,\, 2^{\,n/p}\,2\,(p+1)^{\,1/2}\, 
N^{\,(1-\kappa)/2}\, .
   \end{align}
Now, we put $p=n$ in \eqref{eq:di2} and \eqref{eq:di3} to obtain, respectively, \eqref{eq:Di01} and \eqref{eq:Di02}.
\end{proof}

%
%
\section{Discrepancy for hemispheres}
Let $\cX_{n}, n=2m+1$ be the Hamming space.
In this section we consider a restricted version of discrepancy where instead of all the ball radii in \eqref{eq:Dp} we consider
discrepancy only with respect to the balls of radius $m$, calling them hemispheres. For any pair of antipodal points $y,\bar y$
   \begin{equation*}
   \cX_n=B(y,m)\cup B(\bar y,m)\, ,\quad B(y,m)\cap B(\bar y,m)=\emptyset\, ,
   \end{equation*}
hence $2^{-n}v(m)=2^{-n}|B(y,m)|=1/2.$   

For a subset $Z_N\subset \cX_n$ 
define
   \begin{equation}
   D_p^{(m)}(Z_N)=\Big(2^{-n}\sum\nolimits_{y\in \cX_n} 
   |D(Z_N,y,m)|^p\Big)^{1/p},\quad 0<p<\infty\, ,
   \label{eq:hsd}
   \end{equation}
where 
    $$
   D(Z_N,y,m)=|B(y,m)\cap Z_N|-\frac {{N}}2
    $$
is the local discrepancy defined in \eqref{eq:local}. In the previous notation 
$D_p^{(m)}(Z_N)=D_p(\,G(m),Z_N)$, with weights $G(m)=(g_1,\dots,g_{n-1}),$ where 
$g_m=1$ and $g_t=0$ if $t\ne m$.
Further, let
    $$
    D_\infty^{(m)}(Z_N)=\max_{y\in \cX_n}|D(Z_{{N}},y)|\, .
    $$
As before, define 
  $$
  D_p^{(m)}(n,N)=\min_{Z_N\subset \cX_n} D_p^{(m)}(Z_N),\quad p\in(0,\infty].
  $$

First we address the question of global minimizers of discrepancy.
\begin{theorem}\label{thm1}
	For the Hamming space $\cX_n$ with odd $n=2m+1$, we have the 
	following.

(i) Let $N=2K$ be even, then for all subsets $Z_N\subseteq \cX_n$ and 
$p\in (0, \infty]$
    \begin{equation}\label{eq:0}
D_p^{(m)}(Z_N)\ge 0
    \end{equation}
with equality for subsets $Z_N$ consisting of $K$ pairs of antipodal points.

(ii) Let $N=2K+1$ be odd, then for all subsets $Z_N\subseteq \cX_n$ and 
$p\in (0, \infty]$
    \begin{equation}\label{eq:1}
D_p^{(m)}(Z_N)\ge 1/2
    \end{equation}
with equality for subsets $Z_N$ consisting of $K$ pairs of antipodal points 
supplemented with a single point.

In other words, for all $p\in (0, \infty]$ the extremal discrepancy
$D_p^{(m)}(n,N)=0$	if $N$ is even and $D_p^{(m)}(n,N)=1/2$ if $N$ is odd.
\end{theorem}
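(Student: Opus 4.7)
The plan is to exploit a single structural observation: since $n=2m+1$ is odd, the partition $\cX_n = B(y,m)\sqcup B(\bar y,m)$ is disjoint, so for any antipodal pair $\{z,\bar z\}\subset\cX_n$ exactly one of the two points belongs to $B(y,m)$, regardless of $y$. Hence each antipodal pair contained in $Z_N$ contributes exactly $1$ to $|B(y,m)\cap Z_N|$ for every centre $y\in\cX_n$.

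For the attainment statements I would compute $D(Z_N,y,m)$ directly. In case (i), if $Z_N$ is a disjoint union of $K$ antipodal pairs, the observation above gives $|B(y,m)\cap Z_N|=K=N/2$ for every $y$, so $D(Z_N,y,m)\equiv 0$ and thus $D_p^{(m)}(Z_N)=0$ for all $p\in(0,\infty]$. In case (ii), if $Z_N$ consists of $K$ antipodal pairs together with one extra point $z^\ast$, then the pairs contribute $K$ and the extra point contributes $\mathbbm{1}_{B(y,m)}(z^\ast)\in\{0,1\}$, so $|B(y,m)\cap Z_N|\in\{K,K+1\}$. Since $N/2=K+\tfrac12$, this yields $|D(Z_N,y,m)|=\tfrac12$ for every $y$, hence $D_p^{(m)}(Z_N)=\tfrac12$ for every $p\in(0,\infty]$.

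For the lower bounds, case (i) is immediate from $D_p^{(m)}(Z_N)\ge 0$. The key point of case (ii) is a parity/half-integrality remark: for any subset $Z_N$ the quantity $|B(y,m)\cap Z_N|$ is a nonnegative integer, while $N/2=K+\tfrac12$ is a half-integer; therefore $|D(Z_N,y,m)|\ge\tfrac12$ for every $y\in\cX_n$. For $0<p<\infty$ this pointwise bound gives
\begin{equation*}
D_p^{(m)}(Z_N)=\Bigl(2^{-n}\sum\nolimits_{y\in\cX_n}|D(Z_N,y,m)|^p\Bigr)^{1/p}\ge \Bigl(2^{-n}\cdot 2^n\cdot (1/2)^p\Bigr)^{1/p}=\tfrac12,
\end{equation*}
and for $p=\infty$ the inequality $D_\infty^{(m)}(Z_N)\ge\tfrac12$ is immediate from the same pointwise bound.

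There is essentially no hard step: the only content is the disjointness of the two hemispheres that makes antipodal pairs perfectly balanced, plus the half-integer parity obstruction when $N$ is odd. The construction in (ii) shows that both inequalities in the parity bound are attained simultaneously for every $y$, which is why equality $D_p^{(m)}(Z_N)=\tfrac12$ holds uniformly in $p$.
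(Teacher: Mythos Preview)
Your proof is correct and follows essentially the same route as the paper: the paper rewrites $2|D(Z_N,y,m)|=\bigl||B(y,m)\cap Z_N|-|B(\bar y,m)\cap Z_N|\bigr|$ using the disjoint partition $\cX_n=B(y,m)\cup B(\bar y,m)$, then invokes the parity of $N$ for the lower bound and the same antipodal-pair construction for equality. Your ``integer minus half-integer'' phrasing and ``each antipodal pair contributes exactly one'' observation are the same two ingredients, just packaged slightly differently.
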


{\em Remark 3.1.} The phenomenon of such small discrepancies for 
hemispheres is also known for Euclidean spheres $S^d\subset \reals^{d+1}$, see \cite{Bilyk2018, 
Skriganov2017, Skriganov2019}. The sphere $S^d$ can be represented as a 
disjoint union of two antipodal hemispheres and the equator. But the equator in 
this partition is of zero invariant measure and has no 
effect on the discrepancy. A similar situation holds for the Hamming space
$\cX_n$ with odd $n$, because in this case the ``equator'' with $t=n/2$ is simply an empty set.

\begin{proof}	
From \eqref{eq:hsd} we conclude that
  $$
N=|B(y,m)\cap Z_N|+|B(\bar y,m)\cap Z_N|\, ,
  $$
and for any $y\in \cX_n$ the local discrepancy  can be written as 
\begin{align}\label{eq:eo}
2|D(Z_N,y,m)|&=\Bigl|\,2\,|B(y,m)\cap Z_N|-N\,\Bigr|
=\Bigl|\,|B(y,m)\cap Z_N|-|B(\bar y,m)\cap Z_N|\,\Bigr|\, .
\end{align}


Let $N=2K$. Inequality \eqref{eq:0} holds for all subsets $Z_N$. If $Z_N$ is formed of $K$ pairs
of antipodal points, then $|D(Z_N,y,m)|=0$ for all $y\in\cX_n$.
This proves part (i).

Let $N=2K+1$. It follows from \eqref{eq:eo} that $2|D(Z_N,y,m)|\ge 1$, since $N$ 
is odd 
and $2\,|B(y,m)\cap Z_N|$ is even. This implies inequality \eqref{eq:1}. Furthermore, 
it also follows from \eqref{eq:eo} that $2|D(Z_N,y,m)|=1$ for all $y\in\cX_n$ 
if  
$Z_N$ consists of $K$ pairs of antipodal points supplemented with a single point. This proves part (ii).
\end{proof}

Thus in particular, any linear code $Z_N\subset\cX_n$ that contains the all-ones vector has discrepancy zero (such codes are called self-complementary). Many well-known families of binary linear codes such as the Hamming codes, BCH codes, etc. possess this property.

A minor generalization of the above proof implies the following useful relation. Let 
$Z_N=Z_N^{'}\cup Z^{''}_N$ be a union of two subsets, where $Z_N^{'}$ contains 
all 
pairs of antipodal points in $Z_N$ then
     $$
D^{(m)}_p(Z_N)=D^{(m)}_p(Z_N^{''})\, ,\quad p\in (0, \infty] .
     $$

\subsection{Quadratic discrepancy for hemispheres}
 In this section we consider the discrepancy $D_p^{(m)}(Z_N)$ defined in \eqref{eq:hsd} for the special case $p=2.$ Let $Z_N\subset \cX_n$ be a code, where $n=2m+1.$
 For a pair of points $x,y\in\cX_n$ such that $d(x,y)=w$ let $\mu_m(x,y)=\mu_m(w)=|B(x)\cap B(y)|$ be the size of the intersection of the
balls of radius $t$ with centers at $x$ and $y.$ By abuse of notation we write 
$\mu_m$ both as a kernel on $\cX_n\times\cX_n$ and as a function
on $\{0,1,\dots,n\}$. This is possible because $\mu_m(x,y)$ depends only on the distance between $x$ and $y$. Note that
$\mu_m(0)=v(m)=2^{n-1}$ and $\mu_m(n)=0.$

In this subsection we use some more specific facts of coding theory. We refer to \cite{mac91} for details.
For a code $Z_N\subset \cX_n$ let 
  $$
  A_w=A_w(Z_N)=\frac 1N|\{(z_i,z_j)\in Z_N^2\mid d(z_i,z_i)=w\}|, \quad w=0,1,\dots,n
  $$
   be the normalized number of ordered pairs of points at distance $w$ (the numbers $A_w,w=0,1,\dots,n$ form the {\em distance distribution} of $Z_N$).
Recall that the {\em dual distance distribution} of the code $Z_N$ is given by
  \begin{equation}\label{eq:MW}
  A^\bot_i=\frac 1N\sum\nolimits_{w=0}^n A_w K^{(n)}_i(w), \quad i=0,1,\dots,n,
  \end{equation}
where $K^{(n)}_i(x)$ be the binary Krawtchouk polynomial of degree $k=0,\dots,n$, defined as follows:
   \begin{equation}\label{eq:K}
   K^{(n)}_i(x)=\sum_{j=0}^i(-1)^j\binom xj\binom{n-x}{i-j}.
   \end{equation}  
The vector $(A^\bot_i)$ forms the MacWilliams transform of the distance distribution of the code $Z_N,$ and if $Z_N$ is a linear code,
it coincides with the weight distribution of the {\em dual code} $Z_N^\bot$ \cite[pp.~129,138]{mac91}. The MacWilliams transform is
an involution \cite[Thm.~3]{del72b}, which enables us to invert relations \eqref{eq:MW}:
  \begin{equation}\label{eq:MWI}
   A_i=\frac {2^n}{N}\sum\nolimits_{w=0}^n A_w^\bot K_i^{(n)}(w), \quad i=0,1,\dots,n.
   \end{equation}
The following result is implied by \cite{Barg2020}, Lemma 4.1.

 \begin{lemma} \label{lemma:mu} The Krawtchouk expansion of the function $\mu_m(w),w=0,1,\dots,n$ has the form
  \begin{equation*}
\mu_m(w)=\hat\mu_0+\sum_{\begin{substack}{k= 1\\ k\text{\rm \,odd}}\end{substack}}^n \hat\mu_kK^{(n)}_k(w)
  \end{equation*}
  where $\hat\mu_0=2^{n-2}$ and for all $k=1,3,\dots,n$
    $$
    \hat\mu_k=2^{-n}\binom{2m}m^2\frac{\binom{m}{(k-1)/2}^2}{\binom{2m}{k-1}^2}.
  $$
  \end{lemma}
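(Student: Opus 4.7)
The plan is to establish the expansion in three steps. The Krawtchouk polynomials $\{K_k^{(n)}\}_{k=0}^n$ form an orthogonal basis for real-valued functions on $\{0,1,\dots,n\}$ under the binomial weight, so $\mu_m(w)=\sum_{k=0}^n\hat\mu_k K_k^{(n)}(w)$ with uniquely determined coefficients
$$\hat\mu_k=\bigl(2^n\tbinom{n}{k}\bigr)^{-1}\sum_{w=0}^n\tbinom{n}{w}\mu_m(w)K_k^{(n)}(w).$$
A direct double count handles the constant term: $\sum_{x,y\in\cX_n}\mu_m(d(x,y))=\sum_{z\in\cX_n}|B(z,m)|^2=2^n v(m)^2$, and since $v(m)=2^{n-1}$ when $n=2m+1$, this yields $\hat\mu_0=2^{-n}v(m)^2=2^{n-2}$, as claimed.

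Next I would exploit antipodal symmetry to kill the positive-index even Krawtchouk coefficients. For odd $n=2m+1$ the balls of radius $m$ around antipodal centers partition the Hamming space: $\cX_n=B(x,m)\sqcup B(\bar x,m)$. Intersecting with $B(y,m)$ gives the functional identity $\mu_m(w)+\mu_m(n-w)=v(m)=2^{n-1}$. Combined with the reflection formula $K_k^{(n)}(n-w)=(-1)^k K_k^{(n)}(w)$ and uniqueness of the Krawtchouk expansion, this forces $\hat\mu_k=0$ for every positive even $k$, so only the constant term and odd-index Krawtchouk terms survive.

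It remains to evaluate $\hat\mu_k$ for odd $k$, which is the content of \cite[Lemma~4.1]{Barg2020}; I would simply invoke that lemma and specialize it to $n=2m+1$. A self-contained derivation would start from $\mu_m(x,y)=(\bb_{B(0,m)}\ast\bb_{B(0,m)})(x+y)$, so that $\hat\mu_k$ is, up to normalization, the square of the Krawtchouk coefficient of the ball indicator, namely the partial sum $\sum_{j=0}^m K_j^{(n)}(k)$. The main obstacle is collapsing this squared partial sum, for odd $k$ and $n=2m+1$, into the clean ratio $\binom{2m}{m}^2 \binom{m}{(k-1)/2}^2/\binom{2m}{k-1}^2$; this uses a Christoffel--Darboux-type manipulation of Krawtchouk polynomials together with the specific parity that makes the boundary terms simplify. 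Since exactly this calculation is carried out in the cited lemma, I would appeal to it rather than reproduce the manipulations.
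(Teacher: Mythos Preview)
Your proposal is correct and matches the paper's own treatment: the paper does not give an independent proof of this lemma but simply states that it ``is implied by \cite{Barg2020}, Lemma~4.1,'' which is exactly the source you invoke for the odd-$k$ coefficients. Your additional verification of $\hat\mu_0=2^{n-2}$ via the double count and your antipodal-symmetry argument for the vanishing of the positive even-index coefficients are both sound and go beyond what the paper itself spells out.
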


In the next proposition we establish a version of Stolarsky's invariance principle for the quadratic discrepancy $D_2^{(m)}(Z_N)$ defined above in \eqref{eq:hsd}.
\begin{proposition} We have
    \begin{align}\label{eq:SI}
    2^n N^{-2} D_2^{(m)}(Z_N)^2& =\frac 1N \sum\nolimits_{w=0}^{n} A_w\mu_m(w)-2^{n-2}\\
      &=\sum_{\begin{substack}{k= 1\\ k\text{\rm 
  \,odd}}\end{substack}}^n 
  \hat\mu_kA_k^\bot. \label{eq:odd}
  \end{align}
\end{proposition}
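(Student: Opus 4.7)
The plan is to follow the standard template for deriving a Stolarsky-type identity: expand the square in the definition of $D_2^{(m)}(Z_N)^2$, interchange the sum over $y\in\cX_n$ with the double sum over code points, and then convert the result to the language of the distance distribution $\{A_w\}$ (and, via MacWilliams, to its dual).

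First, I would write $|B(y,m)\cap Z_N| = \sum_{i=1}^N \bb_{B(z_i,m)}(y)$, using the fact that $y\in B(z_i,m)\iff z_i\in B(y,m)$. Plugging into the definition \eqref{eq:hsd} yields
\begin{equation*}
2^n D_2^{(m)}(Z_N)^2 = \sum_{y\in\cX_n}\Bigl(\sum_{i=1}^N \bb_{B(z_i,m)}(y)-\tfrac{N}{2}\Bigr)^2.
\end{equation*}
Expanding the square gives three terms. The diagonal cross term uses $\sum_y \bb_{B(z_i,m)}(y)=v(m)=2^{n-1}$, while the key observation is
\begin{equation*}
\sum_{y\in\cX_n}\bb_{B(z_i,m)}(y)\bb_{B(z_j,m)}(y)=|B(z_i,m)\cap B(z_j,m)|=\mu_m(d(z_i,z_j)).
\end{equation*}
Thus the expansion becomes $\sum_{i,j}\mu_m(d(z_i,z_j))-2^{n-1}N^2+2^{n-2}N^2$. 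Grouping pairs $(i,j)$ by their common distance gives $\sum_{i,j}\mu_m(d(z_i,z_j))=N\sum_{w=0}^n A_w\mu_m(w)$, so dividing by $N^2$ and multiplying by $2^n$ produces exactly \eqref{eq:SI}.

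For the second equality \eqref{eq:odd}, I would substitute the Krawtchouk expansion of $\mu_m$ from Lemma~\ref{lemma:mu} into $\frac1N\sum_w A_w \mu_m(w)$. The constant term contributes $\hat\mu_0\cdot\frac1N\sum_w A_w = \hat\mu_0 = 2^{n-2}$ (since $\sum_w A_w=N$), which cancels the $-2^{n-2}$ in \eqref{eq:SI}. For each odd $k\ge1$, swapping the order of summation gives $\hat\mu_k\cdot\frac1N\sum_w A_w K_k^{(n)}(w)=\hat\mu_k A_k^\bot$ by \eqref{eq:MW}. Summing over odd $k$ yields \eqref{eq:odd}.

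The derivation is essentially a matter of bookkeeping; there is no real obstacle beyond the algebraic expansion, provided Lemma~\ref{lemma:mu} is taken as input. The one place to be careful is the cancellation of the $2^{n-1}N^2$ and $2^{n-2}N^2$ terms (so that only $-2^{n-2}N^2$ remains), and the fact that the $\hat\mu_0$ term in the Krawtchouk expansion precisely matches and cancels the additive constant in \eqref{eq:SI}, which is what makes \eqref{eq:odd} a clean sum over odd indices only. The vanishing of even-$k$ coefficients is already encoded in Lemma~\ref{lemma:mu} and reflects the antipodal symmetry $B(y,m)\cup B(\bar y,m)=\cX_n$ that underlies the entire hemispherical setting.
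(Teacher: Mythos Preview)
Your proof is correct and follows essentially the same route as the paper: expand the square, identify the cross terms as $\mu_m(d(z_i,z_j))$, regroup by distance, and then feed in the Krawtchouk expansion of Lemma~\ref{lemma:mu} together with the MacWilliams relation \eqref{eq:MW}. The only slip is the phrase ``dividing by $N^2$ and multiplying by $2^n$''---the factor $2^n$ is already present on the left, so only division by $N^2$ is needed---but this does not affect the argument.
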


\begin{proof}   Starting with \eqref{eq:hsd}, we compute 
  \begin{align}
 2^n D_2^{(m)}(Z_N)^2&=\sum\nolimits_{y\in\cX_n}\Big(\sum\nolimits_{j=1}^N 
  \bb_{B(y,m)}(z_j)-\frac N2\Big)^2=\sum\nolimits_{y\in\cX_n}\Big( 
  \sum\nolimits_{j=1}^N \bb_{B(z_j,m)}(y)-\frac N2\Big)^2
  \nonumber\\
  &=\sum\nolimits_{y\in \cX_n}\Big(\sum\nolimits_{i,j=1}^N 
  \bb_{B(z_i,m)}(y)\bb_{B(z_j,m)}(y)-N\sum\nolimits_{j=1}^N 
  \bb_{B(z_j,m)}(y)+\frac{N^2}{4}\Big)\nonumber\\
  &=\sum\nolimits_{i,j=1}^N\sum\nolimits_{y\in 
  \cX_n}\bb_{B(z_i,m)}(y)\bb_{B(z_j,m)}(y)-2^{n-2} N^2\nonumber\\
  &=\sum\nolimits_{i,j=1}^N \mu_m(z_i,z_j)-2^{n-2} N^2=N\sum\nolimits_{w=0}^n A_w\mu_m(w)-2^{n-2}N^2,\nonumber
  \end{align}
where the last equality uses the definition of $A_w.$ This proves \eqref{eq:SI}. 

To obtain \eqref{eq:odd}, substitute the result of Lemma~\ref{lemma:mu} 
into \eqref{eq:SI} and then use \eqref{eq:MW}. 
\end{proof}

The size of the intersection of the balls can be written in a more explicit form:
    $$
    \mu_m(w)=\sum\nolimits_{i,j} \binom wi \binom{n-w}j, \quad w=0,1,\dots,n,
    $$
where $i+j\le m, 0\le w-i+j\le m;$ in particular, $\mu_m(0)=2^{n-1}.$ It is not difficult to show that for any $l=1,2,\dots,\lfloor n/2\rfloor$
 we have $
  \mu_m(2l-1)=\mu_m(2l)
  $
and otherwise $\mu_m(w)$ is a decreasing function of $w$.

  Let $\langle \mu_m\rangle_{\cE}$ be the average value of the kernel $\mu_m(x,y)$ over the subset $\cE\subset \cX_n$.
Since $\langle \mu_m\rangle_{\cX_n}=\hat\mu_0,$ we can write  \eqref{eq:SI} in the following form:
  \begin{equation}\label{eq:D2}
 2^n N^{-2}\, D_2^{(m)}(Z_N)^2=\langle 
 \mu_m\rangle_{Z_N}-\langle\mu_m\rangle_{\cX_n}.
  \end{equation}
Relations \eqref{eq:D2}, \eqref{eq:SI} are similar to the invariance principle for hemispheres in the case of the Euclidean sphere, \cite[Thm.~3.1]{Bilyk2018}. At the same time, the concrete forms of the results for the Hamming space and the sphere are different: while for the sphere the quadratic discrepancy is expressed via the average geodesic distance in $Z_N$, in the Hamming case it is related to the average of the kernel $\mu_m$ and is not immediately connected to the average distance. 
Note that for quadratic discrepancy $D_2(G,Z_N)$ for the Hamming space defined above in \eqref{eq:Dp}, results of this form were previously established in \cite{Barg2020}.

Our final result in this section concerns a characterization of codes with zero discrepancy for hemispheres for the case of even $N$.

\begin{theorem} Let $Z_N$ be a code of even size $N$. Then $D_2^{(m)}(Z_N)=0$ if and only if the code $Z_N$ is formed of $N/2$ antipodal
pairs of points.
\end{theorem}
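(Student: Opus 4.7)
The forward direction is already the content of Theorem \ref{thm1}(i), so only the converse needs attention. The plan is to extract structural information from the hypothesis $D_2^{(m)}(Z_N)=0$ via the Krawtchouk expansion \eqref{eq:odd}. First I would check that every coefficient appearing there is strictly positive: writing $k=2j+1$ with $0\le j\le m$, the explicit formula in Lemma \ref{lemma:mu} is a product of strictly positive factors since $\binom{m}{j}$ and $\binom{2m}{2j}$ are nonzero in this range. Consequently $D_2^{(m)}(Z_N)=0$ forces $A_k^{\perp}=0$ for every odd $k\in\{1,3,\dots,n\}$.

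The heart of the argument is then to turn this vanishing into a usable constraint on $Z_N$. I would invoke the parity symmetry $K_k^{(n)}(n-w)=(-1)^k K_k^{(n)}(w)$ of the Krawtchouk polynomials (immediate from \eqref{eq:K} via the substitution $j\mapsto k-j$). Defining the reflected distance distribution $\tilde A_w:=A_{n-w}$ and applying \eqref{eq:MW} gives $\tilde A_k^{\perp}=(-1)^k A_k^{\perp}$. Combined with the vanishing on odd indices this yields $\tilde A_k^{\perp}=A_k^{\perp}$ for every $k$, and the involutivity of the MacWilliams transform (i.e.\ inverting via \eqref{eq:MWI}) forces $A_w=A_{n-w}$ for all $w$. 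In particular $A_n=A_0=1$.

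The final step is to read $A_n=1$ combinatorially. Every $x\in\cX_n$ has a unique antipode $\bar x\ne x$, so each $z_i\in Z_N$ contributes either $0$ or $1$ to $N\cdot A_n$, according to whether $\bar z_i\in Z_N$. The equality $N\cdot A_n=N$ therefore forces $\bar z_i\in Z_N$ for every $i$, i.e.\ the map $z\mapsto \bar z$ is a fixed-point-free involution on $Z_N$. Since $N$ is even, $Z_N$ splits into $N/2$ antipodal pairs, as desired.

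The main technical obstacle is the middle step: translating the index-wise vanishing of $A_k^{\perp}$ on odd $k$ into the global symmetry $A_w=A_{n-w}$ of the distance distribution. This hinges on combining the Krawtchouk parity symmetry with the involutivity of the MacWilliams transform. The opening positivity check and the closing counting argument are essentially routine.
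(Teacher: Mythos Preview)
Your argument is correct, and the overall architecture matches the paper's: both start by using the strict positivity of the $\hat\mu_k$ from Lemma~\ref{lemma:mu} to deduce $A_k^\bot=0$ for all odd $k$, and both finish by reading $A_n=A_0=1$ as the statement that every codeword has its antipode in $Z_N$.

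The genuine difference is in the middle step, where one must pass from ``$A_k^\bot=0$ for odd $k$'' to ``$A_w=A_{n-w}$ for all $w$.'' The paper does this via linear algebra on a half-range: it writes $NA_k^\bot=\sum_{w=0}^m(A_w-A_{n-w})K_k^{(n)}(w)$ for odd $k$ and then proves that the $(m{+}1)\times(m{+}1)$ matrix $\Phi_m=\bigl(K_{2i+1}^{(n)}(j)\bigr)_{0\le i,j\le m}$ has full rank, by deriving the restricted orthogonality relation $\sum_{w=0}^m K_k^{(n)}(w)K_j^{(n)}(w)\binom{n}{w}=2^{n-1}\binom{n}{k}\delta_{j,k}$ for odd $j,k$. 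Your route is slicker: you observe that the reflected sequence $\tilde A_w:=A_{n-w}$ satisfies $\tilde A_k^\bot=(-1)^kA_k^\bot$, so the vanishing on odd indices forces $\tilde A^\bot=A^\bot$ on \emph{all} indices, and then the invertibility of the full MacWilliams transform \eqref{eq:MWI} gives $\tilde A=A$ directly. This bypasses the rank computation entirely. The paper's approach has the minor side benefit of recording the half-range orthogonality identity \eqref{eq:litor}, but for the theorem itself your argument is shorter and just as rigorous.
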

\begin{proof}
The sufficiency part has been proved in Theorem \ref{thm1}. The proof in the other direction is a combination of the following steps.

{\bf Step 1.} Since $\hat \mu_k>0$ for all $k,$ expression \eqref{eq:odd} implies that a code $Z_N\subset \cX_n$ has zero quadratic discrepancy
for hemispheres if and only if its dual distance coefficients $A_k^\bot\ne 0$ only if $k$ is even,

{\bf Step 2.} A code $Z_N$ is formed of antipodal pairs if and only if its distance distribution is symmetric, i.e., $A_w=A_{n-w}$ for all $w=0,1,\dots,m.$

Indeed, the distance distribution coefficients $A_w, w=0,\dots,n$ can be written as
      \begin{equation}\label{eq:local1}
      A_w=\sum_{z\in Z_N}A_w(z),
      \end{equation}
where       
   $
   A_w(z)=\frac 1N|\{y: d(z,y)=w\}|
   $
is the local distance distribution at the point $z\in Z_N.$

Suppose the code is formed of antipodal pairs. For every $y\in Z_N$ such that $d(z,y)=w$, the opposite point $\bar y$ satisfies $d(z,\bar y)=n-w,$
and thus, the pair $(y,\bar y)$ contributes to $A_w(z)$ and $A_{n-w}(z)$ in equal amounts. Therefore, from \eqref{eq:local1} also $A_w=A_{n-w}.$

Now suppose that the distance distribution is symmetric. For any code $A_0=1,$ and then also $A_n=1,$ but this means that
every code point has a diametrically opposite one, or otherwise \eqref{eq:local1} cannot be satisfied for $w=n.$

{\bf Step 3.} The matrix 
  $$
    \Phi_m=\begin{pmatrix} K^{(n)}_1(0)&K^{(n)}_1(1)&\dots&K^{(n)}_1(m)\\K^{(n)}_3(0)&K^{(n)}_3(1)&\dots&K^{(n)}_3(m)\\\vdots&\vdots &\dots&\vdots\\
    K^{(n)}_{2m+1}(0)&K^{(n)}_{2m+1}(1)&\dots&K^{(n)}_{2m+1}(m)\end{pmatrix}
    $$ 
has rank $m+1.$ This is shown as follows. Orthogonality of Krawtchouk polynomials \cite{del72b}, \cite[Thm 5.16]{mac91} implies that
\begin{align*}
  \binom{n}{k}2^n\delta_{j,k}&=\sum_{w=0}^{2m+1}K^{(n)}_k(w)K^{(n)}_j(w)\binom nw\\
  &=\sum_{w=0}^mK^{(n)}_k(w)K^{(n)}_j(w)\binom nw+\sum_{w=m+1}^{2m+1} (-1)^{j+k}
  K^{(n)}_k(n-w)K^{(n)}_j(n-w)\binom n{n-w}\\
  &=2\sum_{w=0}^mK^{(n)}_k(w)K^{(n)}_j(w)\binom nw.
  \end{align*} 
  Here on the second line we used the relation
  \begin{equation}\label{eq:sym}
  K^{(n)}_k(w)=(-1)^kK^{(n)}_k(n-w), \quad0\le k,w\le n.
  \end{equation}
which is immediate from \eqref{eq:K}.
In other words, for odd $j,k$ we have
   \begin{equation}\label{eq:litor}
   \sum_{w=0}^m K^{(n)}_k(w)K^{(n)}_j(w)\binom nw=\delta_{k,j}2^{n-1}\binom nk.
   \end{equation}
Rephrasing this relation, we obtain 
   $$
   \Phi_m B\Phi_m^T=2^{n-1}\text{diag}\Big(\binom n1,\binom n3,\dots,\binom n{2m+1}\Big),
   $$
   where $B=\text{diag}(\binom nw, w=0,1,\dots,m).$
This implies that $\rank(\Phi_m)=m.$  
  
{\bf Step 4.} To complete the proof, 
suppose that $D_2^{(m)}(Z_N)=0$ and thus from Step 1 above, $A_k^\bot=0$ for all odd $k$. 
In particular, for $k=1,3,\dots,2m+1,$ using \eqref{eq:MW} and \eqref{eq:sym}, we obtain
   \begin{align}\label{eq:rank}
   \sum_{w=0}^{2m+1}A_w K^{(n)}_k(w)&=\sum_{w=0}^m (A_w-A_{n-w}) K^{(n)}_k(w)=0.
    \end{align}
Define the vector
  $$
  \alpha=(A_w-A_{n-w}, w=0,1,\dots,m).
  $$
From \eqref{eq:rank} and the definition of $\Phi_m$ we obtain that $\Phi_m\alpha^T=0.$ From Step 3), we conclude that $\alpha=0$ or $A_w=A_{n-w}, w=0,1,\dots,m.$  Now Step 2 implies our claim.
\end{proof}

\end{document}